\newcommand{\eqdef}{\stackrel{\mathrm{def}}{=}}
\newtheorem{theorem}{Theorem}
\newtheorem{lemma}[theorem]{Lemma}
\newtheorem{proposition}[theorem]{Proposition}
\newtheorem{corollary}[theorem]{Corollary}
\newtheorem{remark}[theorem]{Remark}
\newtheorem*{definition*}{Definition}
\newcommand{\R}{\mathbb{R}} % real numbers
\newcommand{\Z}{\mathbb{Z}}
\newcommand{\N}{\mathbb{N}}
\newcommand{\e}{\varepsilon}
\newcommand{\K}{\mathsf{K}}
\newcommand{\E}{\mathbb{E}}
\newcommand{\mb}{\mathbb}
\newcommand{\ms}{\mathscr}
\newcommand{\msf}{\mathsf}
\newcommand{\mr}{\mathrm}
\title{Approximation of polynomials from Walsh tail spaces}
\author{Alexandros Eskenazis}
\address{(A.~E.) CNRS, Institut de Math\'ematiques de Jussieu, Sorbonne Universit\'e, France and Trinity College, University of Cambridge, UK.}
\email{alexandros.eskenazis@imj-prg.fr, ae466@cam.ac.uk}
\author{Haonan Zhang}
\address{(H.~Z.)
Department of Mathematics,
University of South Carolina\\
Columbia, SC, 29208, USA.}
\email{haonanzhangmath@gmail.com, haonanz@mailbox.sc.edu}
\thanks{This material is based upon work supported by the NSF grant DMS-1929284 while the authors were in residence at ICERM for the Harmonic Analysis and Convexity program. }
\begin{document}

\maketitle
%\vspace{-3mm}

\begin{abstract}
We derive various bounds for the $L_p$ distance of polynomials on the hypercube from Walsh tail spaces, extending some of Oleszkiewicz's results (2017) for Rademacher sums.
\end{abstract}

%\smallskip
\bigskip

{\footnotesize
\noindent {\em 2020 Mathematics Subject Classification.} Primary: 42C10; Secondary: 41A17, 60E15.

\noindent {\em Key words.} Discrete hypercube, polynomial inequalities, Rademacher sum,  tail space.}

%---------------------------------------------------------------------------------------------
%---------------------------------------------------------------------------------------------

\section{Introduction}

Given $n\in\N=\Z_{\ge 1}$, every function $f:\{-1,1\}^n\to\R$ admits a unique Fourier--Walsh expansion
\begin{equation}
\forall \ x\in\{-1,1\}^n,\qquad f(x) = \sum_{S\subseteq\{1,\ldots,n\}} \widehat{f}(S) w_S(x),
\end{equation}
where the Walsh function $w_S$ is given by $w_S(x) = \prod_{i\in S} x_i$ for $x=(x_1,\ldots,x_n) \in\{-1,1\}^n$. We shall say that $f$ is of degree at most $k\in\{1,\ldots,n\}$ if $\widehat{f}(S)=0$ for every subset $S$ of $\{1,\ldots,n\}$ with $|S|>k$. Similarly, we say that $f$ belongs on the $k$-th tail space, where $k\in\{1,\ldots,n\}$, if $\widehat{f}(S)=0$ for every subset $S$ with $|S|\leq k$. More generally, given a nonempty subset $I \subseteq\{0,1,\ldots,n\}$, we denote by
\begin{equation}
\ms{P}_I^n \eqdef \big\{ f:\{-1,1\}^n\to\R: \ \widehat{f}(S) = 0 \mbox{ for every } S \mbox{ with } |S|\notin I\big\}.
\end{equation}
We shall also adopt the natural notations $\ms{P}_{>k}^n = \ms{P}_{\{k+1,\ldots,n\}}^n$, $\ms{P}_{\leq k}^n = \ms{P}_{\{0,1,\ldots,k\}}^n$, $\ms{P}_{=k}^n=\ms{P}_{\{k\}}^n$ and so on.

Many modern developments in discrete analysis (see \cite{O'D14}) are centered around quantitative properties of functions whose spectrum is bounded above or below, in analogy with estimates established for polynomials in classical approximation theory on the torus $\mb{T}^n$ or on $\mb{R}^n$. One of the first results of this nature, going back at least to \cite{Bon70,Bou80}, is the important fact that all finite moments of low-degree Walsh polynomials are equivalent to each other up to dimension-free factors. Namely, given any $1\leq p\leq q <\infty$ and $k\in\N$, there exists a (sharp) constant $\msf{M}_{p,q}(k)$ such that for any $n\geq k$, every polynomial $f:\{-1,1\}^n\to \R$ of degree at most $k$ satisfies
\begin{equation} \label{eq:mom}
\|f\|_q \leq \msf{M}_{p,q}(k) \|f\|_p,
\end{equation}
where $\|\cdot\|_r$ always denotes the $L_r$ norm on $\{-1,1\}^n$ with respect to the uniform probability measure. Note that the reverse of \eqref{eq:mom} holds trivially with constant $1$ by H\"older's inequality. We refer to \cite{EI20,IT19,LS22} for the best known bounds on the implicit constant $\msf{M}_{p,q}(k)$. In the special case $k=1$, \eqref{eq:mom} is the celebrated Khintchine inequality \cite{Khi23} for Rademacher sums.

Our starting point is the simple observation that the moment comparison estimates \eqref{eq:mom} have the following (equivalent) dual formulation in terms of distances from tail spaces.

\begin{proposition} \label{prop:dual-mom}
For every $1\leq p\leq q< \infty$ and $k\in\N$, the constant $\msf{M}_{p,q}(k)$ in inequality \eqref{eq:mom} is also the least constant for which every function $f:\{-1,1\}^n\to \R$,  where $n\geq k$, satisfies
\begin{equation} \label{eq:dual-mom}
\inf_{g\in \ms{P}^n_{>k}} \|f-g\|_{p^\ast} \leq \msf{M}_{p,q}(k) \inf_{g\in \ms{P}^n_{>k}} \|f-g\|_{q^\ast},
\end{equation}
where the conjugate exponent $r^\ast$ of $r\in[1,\infty]$ satisfies $\frac{1}{r^\ast}+\frac{1}{r} = 1$.
\end{proposition}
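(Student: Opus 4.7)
The proof plan is to reduce the statement to finite-dimensional Hahn--Banach duality. The key observation is that $\ms{P}_{\leq k}^n$ and $\ms{P}_{>k}^n$ are $L_2$-orthogonal complements, so under the natural pairing $\langle \phi, \psi\rangle = \E[\phi \psi]$ the annihilator of $\ms{P}_{>k}^n$ is exactly $\ms{P}_{\leq k}^n$. Finite-dimensional Hahn--Banach duality therefore gives, for every $F:\{-1,1\}^n\to\R$ and every $r\in[1,\infty]$, the identity
\[
\inf_{g\in\ms{P}_{>k}^n}\|F-g\|_{r^\ast} = \sup\big\{\langle F,h\rangle : h\in \ms{P}_{\leq k}^n,\ \|h\|_r\leq 1\big\}.
\]
A second, elementary ingredient is: for any $u\in\ms{P}_{\leq k}^n$, any $v:\{-1,1\}^n\to\R$ and any $r\in[1,\infty]$,
\[
\langle u,v\rangle \leq \|u\|_r \cdot \inf_{g\in\ms{P}_{>k}^n}\|v-g\|_{r^\ast},
\]
obtained by writing $\langle u,v\rangle = \langle u, v-g\rangle$ for arbitrary $g\in\ms{P}_{>k}^n$ (using orthogonality), applying H\"older's inequality, and infimizing over $g$.

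To prove the bound $\inf_{g\in\ms{P}_{>k}^n}\|F-g\|_{p^\ast}\leq \msf{M}_{p,q}(k)\,\inf_{g\in\ms{P}_{>k}^n}\|F-g\|_{q^\ast}$, express the left-hand side via the duality identity with $r=p$. For each test function $h\in\ms{P}_{\leq k}^n$ with $\|h\|_p\leq 1$, inequality \eqref{eq:mom} gives $\|h\|_q\leq \msf{M}_{p,q}(k)$, and the elementary inequality (with $u=h$, $v=F$, $r=q$) then yields $\langle F,h\rangle \leq \|h\|_q\cdot \inf_{g\in\ms{P}_{>k}^n}\|F-g\|_{q^\ast} \leq \msf{M}_{p,q}(k)\,\inf_{g\in\ms{P}_{>k}^n}\|F-g\|_{q^\ast}$. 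Taking the supremum over $h$ finishes this direction.

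For the converse, denote by $\msf{D}$ the best constant in \eqref{eq:dual-mom}, fix $f\in\ms{P}_{\leq k}^n$, and use the standard duality $\|f\|_q = \sup\{\langle f,h\rangle : \|h\|_{q^\ast}\leq 1\}$. For each such $h$, the trivial bound $\inf_{g\in\ms{P}_{>k}^n}\|h-g\|_{q^\ast}\leq \|h\|_{q^\ast}\leq 1$ (take $g=0$) together with \eqref{eq:dual-mom} applied to $h$ gives $\inf_{g\in\ms{P}_{>k}^n}\|h-g\|_{p^\ast}\leq \msf{D}$, and the elementary inequality (with $u=f$, $v=h$, $r=p$) yields $\langle f,h\rangle \leq \|f\|_p\cdot \inf_{g\in\ms{P}_{>k}^n}\|h-g\|_{p^\ast} \leq \msf{D}\|f\|_p$. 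Supremizing over $h$ produces \eqref{eq:mom} with constant $\msf{D}$, hence $\msf{M}_{p,q}(k)\leq \msf{D}$. I do not anticipate any real obstacle: the argument is purely formal duality between the two complementary subspaces $\ms{P}_{\leq k}^n$ and $\ms{P}_{>k}^n$, and the only subtlety is the symmetric exchange of roles of low-degree functions and arbitrary test functions in the two directions.
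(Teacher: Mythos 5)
Your argument is correct and rests on exactly the same duality as the paper: $\ms{P}_{\leq k}^n$ with the $L_r$-norm has dual $L_{r^\ast}/\ms{P}_{>k}^n$ with the quotient norm, so the optimal constant in \eqref{eq:mom} equals the optimal constant in \eqref{eq:dual-mom}. The paper packages this as the operator-norm identity $\|\msf{id}\|=\|\msf{id}^\ast\|$, whereas you unpack both directions by hand via the Hahn--Banach description of the quotient norm as a supremum; the content is the same.
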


Again, the reverse of \eqref{eq:dual-mom} holds with constant $1$. In the special case $k=0$, inequality \eqref{eq:dual-mom} becomes trivial with $\msf{M}_{p,q}(0)=1$ as both sides are equal to $|\mb{E}f|$. When $k=1$, which corresponds to the dual of the classical Khintchine inequality, we derive the following more precise formula for the distance from the tail space $\ms{P}_{>1}^n$.

\begin{theorem} \label{thm:dual-hit}
For every $1<r\leq \infty$ and $n\in\N$, every $f:\{-1,1\}^n\to\R$ satisfies\footnote{Throughout the paper we shall use standard asymptotic notation. For instance, $\xi\lesssim \eta$ (or $\eta\gtrsim \xi)$ means that there exists a universal constant $c>0$ such that $\xi \leq c\eta$ and $\xi\asymp\eta$ stands for $(\xi\lesssim \eta)\wedge (\eta\lesssim\xi)$. We shall use subscripts of the form $\lesssim_t, \gtrsim_t, \asymp_t$ when the implicit constant $c$ depends on some prespecified parameter $t$.}
\begin{equation}
\inf_{g\in \ms{P}^n_{>1}} \|f-g\|_{r} \asymp |\mb{E}f|+ \max_{i\in\{1,\ldots,n\}} \big| \widehat{f}(\{i\})\big|  + \sqrt{\frac{r-1}{r}} \Big(\sum_{i=1}^n \widehat{f}(\{i\})^2\Big)^{1/2}.
\end{equation}
\end{theorem}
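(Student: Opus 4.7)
The first step is to reduce to the case where $f$ is a degree-$\le 1$ polynomial: since $f - P_{\leq 1}f \in \ms{P}_{>1}^n$, the coset $f + \ms{P}_{>1}^n$ equals $(P_{\leq 1}f) + \ms{P}_{>1}^n$, and the target distance depends only on $\phi \eqdef c + S$, where $c = \mb{E}f$ and $S = \sum_i a_i x_i$ with $a_i = \widehat{f}(\{i\})$. We may therefore assume $f = \phi$ throughout.

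For the \emph{lower bound}, the three summands on the right-hand side are each obtained by testing $h = f-g$ against appropriate functionals. H\"older's inequality with $\mb{E}h = c$ and $\mb{E}[hx_i] = a_i$ immediately yields $|c|, |a_i| \le \|h\|_r$ for every $i$. The third bound splits into two cases: for $r \ge 2$, just $\|h\|_r \ge \|h\|_2 \ge (\sum_i a_i^2)^{1/2} \ge \sqrt{(r-1)/r}\,\|a\|_2$; for $r \in (1,2)$, testing against $\psi = S/\|S\|_{r^*}$ and using the sharp hypercontractive Khintchine inequality $\|S\|_{r^*} \le \sqrt{r^*-1}\,\|a\|_2 = \|a\|_2/\sqrt{r-1}$ (valid for $r^* \ge 2$) gives
\[
\|h\|_r \ge \mb{E}[h\psi] = \frac{\|a\|_2^2}{\|S\|_{r^*}} \ge \sqrt{r-1}\,\|a\|_2 \ge \sqrt{(r-1)/r}\,\|a\|_2.
\]

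For the \emph{upper bound}, the natural tool is Hahn--Banach duality. Since the annihilator $(\ms{P}_{>1}^n)^\perp$ in $L_{r^*}$ coincides with $\ms{P}_{\leq 1}^n$, Parseval gives
\[
\inf_{g \in \ms{P}_{>1}^n}\|\phi - g\|_r \;=\; \sup\Big\{\, c\alpha + \sum\nolimits_i a_i \beta_i \,:\, \big\|\alpha + \textstyle\sum_i \beta_i x_i\big\|_{r^*} \le 1\,\Big\}.
\]
The term $|c\alpha|$ contributes at most $|c|$ (by Jensen). For the linear part, the key input is the sharp two-term moment estimate of Hitczenko and Montgomery--Smith for Rademacher sums: when $r^* \ge 2$, any $\beta$ with $\|\sum_i \beta_i x_i\|_{r^*} \lesssim 1$ must satisfy, up to absolute constants, $\sum_{i \le r^*} \beta^*_{(i)} \lesssim 1$ and $\big(\sum_{i > r^*}(\beta^*_{(i)})^2\big)^{1/2} \lesssim 1/\sqrt{r^*}$, where $(\beta^*_{(i)})$ denotes the decreasing rearrangement of $|\beta|$. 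Using the rearrangement inequality, splitting $\sum a_i \beta_i$ at index $\lceil r^*\rceil$, and applying Cauchy--Schwarz on the tail yields
\[
\sum\nolimits_i a_i \beta_i \;\lesssim\; \max_i|a_i|\cdot \!\sum_{i \le r^*}\beta^*_{(i)} + \|a\|_2 \cdot \Big(\!\sum_{i > r^*}(\beta^*_{(i)})^2\Big)^{\!1/2} \;\lesssim\; \max_i|a_i| + \sqrt{(r-1)/r}\,\|a\|_2,
\]
since $1/\sqrt{r^*} = \sqrt{(r-1)/r}$. For $r \ge 2$ (so $r^* \le 2$), the bound follows more directly since then $\|\beta\|_2 \le \|\sum_i \beta_i x_i\|_{r^*}$ already suffices.

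The main technical obstacle is the appeal to the sharp Hitczenko--Montgomery--Smith description of $L_{r^*}$-norms of Rademacher sums; everything else reduces to standard duality and H\"older-type manipulations. An alternative route is to invoke Oleszkiewicz's 2017 theorem for Rademacher sums (which is the $c = 0$ case of the statement) as a black box and combine it with the triangle inequality to absorb the constant contribution $|c|$.
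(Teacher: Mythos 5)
Your proof is essentially correct and rests on the same key input as the paper's: the Hitczenko (equivalently Hitczenko--Montgomery--Smith) description of $L_p$-norms of Rademacher sums combined with Hahn--Banach duality for the quotient $L_r/\ms{P}^n_{>1}$. The paper packages the upper and lower bounds simultaneously by reading Hitczenko's theorem as the statement that the map $T\pmb{a}=f_{\pmb{a}}$ is an isomorphism from $(\R^{n+1},\K(\cdot,\sqrt{r^*};\ell_1^{n+1},\ell_2^{n+1}))$ onto $(\ms{P}_{\le 1}^n,\|\cdot\|_{r^*})$ and then invoking $\K$-functional duality (sums $\leftrightarrow$ intersections) to compute the dual norm $\max\{\|\cdot\|_{\ell_\infty},\|\cdot\|_{\ell_2}/\sqrt{r^*}\}$ in one stroke; you carry out the two directions of the duality by hand, which is more explicit but longer. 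Two small inaccuracies to correct: (i) in the $r\ge 2$ case of your upper bound, the inequality $\|\beta\|_2\le\|\sum_i\beta_i x_i\|_{r^*}$ is reversed (for $r^*\le 2$ one has $\|\sum_i\beta_i x_i\|_{r^*}\le\|\beta\|_2$); what you actually need is the lower Khintchine bound $\|\sum_i\beta_i x_i\|_{r^*}\ge A_{r^*}\|\beta\|_2$ with $A_{r^*}\ge 1/\sqrt{2}$, after which Cauchy--Schwarz finishes; (ii) the concluding alternative is misattributed: Oleszkiewicz's 2017 theorem concerns the $L_1$-distance from $\ms{P}_{>k}^n$ and is not the $c=0$ case of this statement; the relevant black box for the $c=0$ case here is Hitczenko's theorem for $L_{r^*}$-norms, which is precisely what you (and the paper) already use.
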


This is the dual to a well-known result of Hitczenko \cite{Hit93} (see also \cite{Mon90,HK94}), obtaining $p$-independent upper and lower bounds for the $L_p$-norms of Rademacher sums, where $p\in[1,\infty)$.

At this point, we should point out that in both Proposition \ref{prop:dual-mom} and Theorem \ref{thm:dual-hit}, the exponents of the norms are always strictly greater than 1. For instance, choosing $f_1(x)=\sum_{i=1}^n x_i$, \eqref{eq:dual-mom} gives
\begin{equation}
\forall \ r\in(1,\infty],\qquad \inf_{g\in \ms{P}^n_{>k}} \|f_1-g\|_{r} \asymp_{r,k} \inf_{g\in \ms{P}^n_{>k}} \|f_1-g\|_{2} = \sqrt{n}. 
\end{equation}
On the other hand, it follows from a result of Oleszkiewicz \cite{Ole17}, which is the main precursor to this work, that the $L_1$-distance of $f_1$ from the $k$-th tail space satisfies
\begin{equation} \label{eq:ole0}
\inf_{g\in \ms{P}^n_{>k}} \|f_1-g\|_{1}  \asymp \min\{k,\sqrt{n}\},
\end{equation}
and thus exhibits a starkly different behavior as $n\to\infty$ from the $L_r$ norms with $r>1$.

More generally, it is shown in \cite{Ole17} that for every $a_1\geq \cdots \geq a_n\geq0$, we have
\begin{equation} \label{eq:ole}
\inf_{g\in \ms{P}^n_{>k}} \| f_{\pmb{a}} -g\|_{1}  \asymp \min_{r\in\{0,1,\ldots,n\}}\bigg\{ \Big(\sum_{i=1}^r a_i^2\Big)^{1/2} +k a_{r+1}\bigg\},
\end{equation}
where for $\pmb{a}=(a_1,\ldots,a_n)$ we denote $f_{\pmb{a}}(x) = \sum_{i=1}^n a_i x_i$ and we make the convention that $a_{n+1}=0$.  The quantity appearing on the right hand side of \eqref{eq:ole} can be rephrased in terms of the $\K$-functional of real interpolation (see \cite[Chapter~3]{BL76}). Recall that if $(A_0,A_1)$ is an interpolation pair, then the Lions--Peetre $\K$-functional is defined for every $t\geq0$ and $a\in A_0+A_1$ as
\begin{equation} \label{def:K}
\K(a,t;A_0,A_1) \eqdef \inf\big\{ \|a_0\|_{A_0}+t\|a_1\|_{A_1}: \ a=a_0+a_1\big\}.
\end{equation}
It is elementary to check (see \cite{Hol70}),  that if $a_1\geq\cdots\geq a_n\geq0$ and $k\in\N$,  then
\begin{equation} \label{eq:ele}
\min_{r\in\{0,1,\ldots,n\}}\bigg\{ \Big(\sum_{i=1}^r a_i^2\Big)^{1/2} +k a_{r+1}\bigg\} \asymp \K(\pmb{a},k;\ell_2^n,\ell_\infty^n).
\end{equation}
Note that the right-hand side is invariant under permutations of the entries of $\pmb{a}$.
The main result of this work is an appropriate extension of the upper bound in Oleszkiewicz's result \eqref{eq:ole} to polynomials of arbitrary degree on the discrete hypercube.

\begin{theorem} \label{thm:main}
For every $d\in\N$, there exists $\msf{C}_d\in(0,\infty)$ such that for any $n\geq k\geq d$,  every polynomial $f:\{-1,1\}^n\to \R$ of degree at most $d$ satisfies 
\begin{equation} \label{eq:main}
\inf_{g\in \ms{P}_{>k}^n} \|f-g\|_1 \leq \K \big( \widehat{f}, \msf{C}_d k^d; \ell_2^m, \ell_{\frac{2d}{d-1}}^m\big),
\end{equation}
where $\widehat{f}$ is the vector of Fourier coefficients of $f$, viewed as an element of $\R^m$ with $m=\binom{n}{0}+\cdots+\binom{n}{d}$.
\end{theorem}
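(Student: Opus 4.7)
The plan is to reduce the bound to the following local estimate: for every polynomial $p\in\ms{P}_{\leq d}^n$ with $n\ge k\ge d$,
\[
\inf_{h\in\ms{P}_{>k}^n}\|p-h\|_1 \leq \msf{C}_d k^d \|\widehat p\|_{\ell_{2d/(d-1)}^m}. \qquad (\ast)
\]
Given any splitting $\widehat f = \widehat{f_0}+\widehat{f_1}$ of the Fourier coefficients (so $f=f_0+f_1$ as polynomials of degree at most $d$), the subadditivity of $f\mapsto\inf_{g\in\ms{P}^n_{>k}}\|f-g\|_1$, combined with the trivial bound $\inf_g\|f_0-g\|_1\le\|f_0\|_1\le\|f_0\|_2=\|\widehat{f_0}\|_{\ell_2^m}$ and with $(\ast)$ applied to $f_1$, yields
\[
\inf_g\|f-g\|_1 \leq \|\widehat{f_0}\|_{\ell_2^m} + \msf{C}_d k^d \|\widehat{f_1}\|_{\ell_{2d/(d-1)}^m}.
\]
Taking the infimum over decompositions of $\widehat f$ recovers the $\K$-functional in \eqref{eq:main}.

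To establish $(\ast)$, I would dualize via $L_1$--$L_\infty$:
\[
\inf_{h\in\ms{P}_{>k}^n}\|p-h\|_1 = \sup\big\{|\langle p,\phi\rangle| : \phi\in\ms{P}_{\leq k}^n,\ \|\phi\|_\infty\le 1\big\}.
\]
For $\phi$ in this set, Parseval and H\"older with conjugate exponents $\tfrac{2d}{d-1},\tfrac{2d}{d+1}$ give
\[
|\langle p,\phi\rangle| = \Big|\sum_{|S|\le d}\widehat p(S)\widehat\phi(S)\Big| \leq \|\widehat p\|_{\ell_{2d/(d-1)}^m}\cdot \|\widehat{\phi_{\leq d}}\|_{\ell_{2d/(d+1)}^m}.
\]
Since $\phi_{\leq d}$ has degree at most $d$, the Bohnenblust--Hille inequality provides $\|\widehat{\phi_{\leq d}}\|_{\ell_{2d/(d+1)}^m}\lesssim_d\|\phi_{\leq d}\|_\infty$. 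Hence $(\ast)$ reduces to a Bernstein--Markov-type estimate for Fourier truncations,
\[
\|\phi_{\leq d}\|_\infty \leq \msf{C}'_d k^d \|\phi\|_\infty \qquad \text{for every } \phi\in\ms{P}_{\leq k}^n. \qquad (\ast\ast)
\]

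To prove $(\ast\ast)$, I would exploit the noise semigroup $\{T_\rho\}_{\rho\in[-1,1]}$, which acts on $\ms{P}^n_{\leq k}$ as $T_\rho\phi=\sum_{j=0}^k\rho^j\phi_{=j}$ and is a contraction on $L_\infty$ for every $\rho\in[-1,1]$. If $\mu$ is any signed Borel measure on $[-1,1]$ satisfying the moment conditions $\int_{-1}^1\rho^j\,d\mu(\rho) = \mathbf{1}[j\le d]$ for $j=0,1,\ldots,k$, then $\phi_{\leq d}=\int_{-1}^1 T_\rho\phi\,d\mu(\rho)$, hence $\|\phi_{\leq d}\|_\infty\le |\mu|([-1,1])\|\phi\|_\infty$. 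By Hahn--Banach duality, the minimal total variation of such a $\mu$ equals
\[
\sup\Big\{\sum_{j=0}^d p_j :\ p(\rho) = \sum_{j=0}^k p_j\rho^j,\ \sup_{[-1,1]}|p|\le 1\Big\}.
\]
Bounding this supremum by $\sum_{j=0}^d|p_j|$ and applying Cauchy's formula on the circle $|z|=1/k$ together with the Bernstein--Walsh estimate $|p(z)|\lesssim(1+1/k)^k\le e$ on that circle yields $|p_j|\lesssim k^j$ uniformly in $p$; summing over $j\le d$ then produces the claimed $O_d(k^d)$ bound.

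The principal difficulty is $(\ast\ast)$, which quantifies the $L_\infty$ blow-up of the low-level Fourier projection on $\ms{P}_{\leq k}^n$ and whose proof via the noise-operator integral representation and the dual extremal problem for univariate polynomials on $[-1,1]$ is the technical core of the argument. Once $(\ast\ast)$ is in hand, the outer steps (duality, H\"older, and Bohnenblust--Hille) combine routinely.
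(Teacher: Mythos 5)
Your argument is correct and, although phrased differently, rests on exactly the same three ingredients as the paper's proof: the $L_1$--$L_\infty$ duality for the tail-space distance, the discrete Bohnenblust--Hille inequality, and the bound $\|\phi_{\leq d}\|_\infty\lesssim_d k^d\|\phi\|_\infty$ for $\phi\in\ms{P}_{\leq k}^n$, which is precisely Figiel's estimate from Proposition~\ref{prop:figiel} (with constant $\sum_{\ell\le d}|\tilde c(k,\ell)|\le e k^d$). The principal stylistic difference is that the paper packages the whole computation as a single operator-norm bound $\|A\|\le 1$ for $A\colon X/Z\to Y$ followed by dualization and the identification of $Y^*$ with a $\K$-functional space via \cite[Theorem~2.7.1]{BL76}; you instead split $\widehat f$ directly, invoke subadditivity of the tail-distance and the Parseval bound for the $\ell_2$ piece, and prove your estimate $(\ast)$ by hand on the dual side. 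These are two presentations of the same duality, and both lead cleanly to the $\K$-functional. The other difference is that you reprove Figiel's bound $(\ast\ast)$ from scratch (via the noise-operator integral representation, the extremal-measure duality on $[-1,1]$, and Cauchy/Bernstein--Walsh on $|z|=1/k$) rather than citing it; this is the standard proof of Figiel's inequality (as in the reference \cite{EI22} the paper points to), so it is a reasonable addition, but it could simply be replaced by a citation. Two minor technical remarks: (i) on the circle $|z|=1/k$ the Bernstein--Walsh bound actually gives $|p(z)|\le\big(\tfrac1k+\sqrt{1+\tfrac1{k^2}}\big)^k$, which is $O(1)$ but not quite $(1+1/k)^k$ as you wrote --- the conclusion is unaffected; (ii) summing $|p_j|\lesssim k^j$ over $j\le d$ gives $\lesssim k^d$ with a universal constant once $k\ge 2$ (by geometric summation), so the factor need not be allowed to depend on $d$ at that step.
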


As was already pointed out by Oleszkiewicz,  the method of \cite{Ole17} does not appear to extend beyond Rademacher sums.  Instead,  in our proof we shall employ the discrete \emph{Bohnenblust--Hille} inequality from approximation theory (see \cite{Ble01,DS14,DMP19,DGMS19}) along with a classical bound of Figiel on the Rademacher projection of polynomials.  A discussion concerning the size of the implicit constant $\msf{C}_d$ appearing in \eqref{eq:main} is postponed to Section \ref{sec:2} (see Remark \ref{rem:d} there).

Unlike the two-sided inequality \eqref{eq:ole}, our bound \eqref{eq:main} is only one-sided and as a matter of fact there are examples in which it is far from optimal.  In particular, for functions which are permutationally symmetric, we obtain a more accurate estimate.  In what follows, we shall denote by $T_k(x) = \sum_{\ell=0}^k c(k,\ell) x^\ell$ the $k$-th Chebyshev polynomial of the first kind characterized by the property $T_k(\cos \theta) = \cos(k\theta)$, where $\theta\in\R$.  Moreover, we shall use the ad hoc notation
\begin{equation} \label{eq:ctil}
\tilde{c}(k,\ell) \eqdef \begin{cases} c(k,\ell), & \mbox{ if } k-\ell \mbox{ is even} \\ c(k-1, \ell),& \mbox{ if } k-\ell \mbox{ is odd} \end{cases}. 
\end{equation}
For $\ell\in\{1,\ldots,n\}$, let $f_\ell$ be the $\ell$-th elementary symmetric multilinear polynomial
\begin{equation}
\forall \ x\in\{-1,1\}^n, \qquad f_\ell(x) \eqdef\sum_{\substack{S\subseteq\{1,\ldots,n\}: \\  |S|=\ell}} w_S(x).
\end{equation}
We have the following bound on the distance of symmetric polynomials from tail spaces.

\begin{theorem} \label{thm:symmetric}
Let $n,k,d\in\N$ with $n\geq k\geq d$. Then, every symmetric polynomial 
\begin{equation}
f = \sum_{\ell=0}^d \alpha_\ell f_\ell
\end{equation}
of degree at most $d$ on $\{-1,1\}^n$ satisfies
\begin{equation} \label{eq:sym}
\inf_{g\in \ms{P}_{>k}^n} \|f-g\|_1 \leq \sum_{\ell=0}^d |\alpha _\ell| | \tilde{c}(k,\ell)|.
\end{equation}
\end{theorem}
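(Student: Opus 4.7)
The plan is to establish the bound one $f_\ell$ at a time: since $f = \sum_{\ell=0}^d \alpha_\ell f_\ell$, by the triangle inequality it suffices to exhibit, for every $\ell \in \{0, \ldots, d\}$, a function $g_\ell \in \ms{P}_{>k}^n$ with $\|f_\ell - g_\ell\|_1 \leq |\tilde{c}(k, \ell)|$; the choice $g := \sum_\ell \alpha_\ell g_\ell$ then lies in $\ms{P}_{>k}^n$ and delivers \eqref{eq:sym}.

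The construction I would use hinges on the multilinear generating identity
\begin{equation*}
G_t(x) \eqdef \prod_{i=1}^n (1 + t x_i) = \sum_{m=0}^n t^m f_m(x),
\end{equation*}
together with the crucial observation that for every $t \in [-1, 1]$ each factor $1 + t x_i$ is non-negative on $\{-1, 1\}$, so $G_t \geq 0$ pointwise and $\mb{E} G_t = \prod_i \mb{E}(1 + t x_i) = 1$; in particular $\|G_t\|_1 = 1$. Consequently, for any finite signed Borel measure $\mu$ supported on $[-1, 1]$, the function $h := \int_{-1}^1 G_t \, d\mu(t)$ satisfies $\|h\|_1 \leq |\mu|([-1, 1])$ by the triangle inequality combined with the nonnegativity of $G_t$, and interchanging sum and integral yields the Walsh decomposition $h = \sum_{m=0}^n \big(\int_{-1}^1 t^m \, d\mu(t)\big) f_m$.

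The remaining task is therefore to produce, for each $\ell \in \{0, \ldots, d\}$, a signed Borel measure $\mu_\ell$ on $[-1, 1]$ with moments $\int_{-1}^1 t^m \, d\mu_\ell(t) = \delta_{m, \ell}$ for every $m \in \{0, 1, \ldots, k\}$ and total variation at most $|\tilde{c}(k, \ell)|$. Granted this, $h_\ell := \int_{-1}^1 G_t \, d\mu_\ell(t)$ decomposes as $h_\ell = f_\ell + \sum_{m > k} \big(\int_{-1}^1 t^m \, d\mu_\ell\big) f_m$, where the second summand lies in $\ms{P}_{>k}^n$; hence $g_\ell := f_\ell - h_\ell$ belongs to $\ms{P}_{>k}^n$ and $\|f_\ell - g_\ell\|_1 = \|h_\ell\|_1 \leq |\tilde{c}(k, \ell)|$, completing the reduction.

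The existence of such $\mu_\ell$ with the asserted total variation is the crux of the argument and comes from linear programming duality on $[-1, 1]$: the infimum of $|\mu|([-1, 1])$ over signed measures with prescribed moments $(\delta_{m, \ell})_{m=0}^k$ equals the supremum of $|a_\ell|$ over real polynomials $p(t) = \sum_{j=0}^k a_j t^j$ satisfying $\|p\|_{L^\infty[-1, 1]} \leq 1$. This dual extremal problem has been classically solved by the Chebyshev--Markov inequality on coefficients of uniformly bounded polynomials: the supremum equals precisely $|\tilde{c}(k, \ell)|$, with extremizer $\pm T_k$ when $k - \ell$ is even and $\pm T_{k-1}$ when $k - \ell$ is odd, which matches the definition \eqref{eq:ctil} verbatim. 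Invoking this extremal bound is the main technical input; the remainder of the proof is the soft combination of the generating function identity, nonnegativity of $G_t$, and LP duality.
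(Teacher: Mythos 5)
Your proof is correct, and it takes a genuinely different route from the paper's. The paper dualizes immediately via Hahn--Banach, writing $\inf_{g\in \ms{P}_{>k}^n}\|f-g\|_1 = \sup_{0\neq h\in\ms{P}_{\leq k}^n} \mb{E}[fh]/\|h\|_\infty$, then computes $\mb{E}[fh] = \sum_{\ell=0}^d \alpha_\ell\,\mr{Rad}_\ell h(1,\ldots,1)$ for symmetric $f$ and invokes Figiel's Rademacher projection bound $\|\mr{Rad}_\ell h\|_\infty \le |\tilde{c}(k,\ell)|\,\|h\|_\infty$ (the paper's Proposition~\ref{prop:figiel}). You instead stay on the primal side: you build an explicit $g\in\ms{P}_{>k}^n$ by integrating the nonnegative generating function $G_t(x)=\prod_i(1+tx_i)=\sum_m t^m f_m(x)$ against a signed measure $\mu_\ell$ on $[-1,1]$ with prescribed moments $\int t^m\,d\mu_\ell=\delta_{m,\ell}$ for $m\le k$, and you identify the minimal total variation of such $\mu_\ell$ with the V.~A.~Markov coefficient bound via Hahn--Banach/Riesz for the functional $p\mapsto a_\ell$ on $(\mathrm{span}(1,\ldots,t^k),\|\cdot\|_{L^\infty[-1,1]})$, whose norm is $|\tilde{c}(k,\ell)|$ with extremizer $T_k$ or $T_{k-1}$ according to parity. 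Both proofs ultimately hinge on the same one-dimensional Chebyshev extremal problem, but yours is constructive and essentially self-contained modulo the classical Markov theorem, whereas the paper's is shorter at the cost of quoting Figiel's projection bound as a black box. One small point worth making explicit: the feasible measure $\mu_\ell$ attaining total variation $|\tilde{c}(k,\ell)|$ indeed exists (Hahn--Banach gives a norm-preserving extension of the coefficient functional and Riesz represents it by a measure whose moments on $1,t,\ldots,t^k$ are as required), so you are not relying on a near-optimizer and a limiting argument; you could mention this so a reader does not worry about attainment.
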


This bound can sometimes be reversed and, in particular, it gives a sharp estimate as $n\to\infty$ for the $L_1$-distance of the elementary symmetric polynomial $f_d$ from the $k$-th tail space.

\begin{corollary} \label{cor}
For every $n,k,d\in\N$ with $n\geq k \geq d$, there exists $\e_n(k,d)>0$ such that
\begin{equation} \label{eq:cor}
| \tilde{c}(k,d)| - \e_n(k,d)  \leq \inf_{g\in \ms{P}_{>k}^n} \|f_d-g\|_1 \leq | \tilde{c}(k,d)|
\end{equation}
and $\lim_{n\to\infty} \e_n(k,d)=0$.
\end{corollary}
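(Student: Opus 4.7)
The upper bound in \eqref{eq:cor} is immediate from Theorem \ref{thm:symmetric} specialized to $f=f_d$ (that is, $\alpha_d=1$ and $\alpha_\ell=0$ for $\ell\neq d$). For the lower bound, the plan is to use $L_1$-$L_\infty$ duality: since the annihilator of $\ms P^n_{>k}$ in $L_\infty$ is $\ms P^n_{\leq k}$,
\begin{equation}
\inf_{g\in \ms{P}^n_{>k}} \|f_d-g\|_1 = \sup\big\{\mb E[hf_d]: h\in \ms P^n_{\leq k},\ \|h\|_\infty\leq 1\big\}.
\end{equation}
It therefore suffices to exhibit a single test function $h\in \ms P^n_{\leq k}$ with $\|h\|_\infty\leq 1$ and $\mb E[hf_d]\geq |\tilde c(k,d)|-o_n(1)$.

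Motivated by the appearance of Chebyshev coefficients on the right-hand side of \eqref{eq:sym}, the natural candidate is obtained by pulling a Chebyshev polynomial back along the normalized linear function $L(x)\eqdef(x_1+\cdots+x_n)/n$. Concretely, let $k'\in\{k,k-1\}$ be chosen of the same parity as $d$, so that by definition $\tilde c(k,d)=[y^d]T_{k'}(y)\neq 0$, and set
\begin{equation}
h(x) \eqdef \mathrm{sign}(\tilde c(k,d))\, T_{k'}\big(L(x)\big).
\end{equation}
Since $|L(x)|\leq 1$ and $\|T_{k'}\|_{L_\infty([-1,1])}=1$, one has $\|h\|_\infty\leq 1$; and since $T_{k'}\circ L$ is a polynomial in the $x_i$'s of total degree $k'\leq k$, its multilinear reduction on the cube lies in $\ms P^n_{\leq k}$, as required.

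The remaining step is to expand
\begin{equation}
\mb E[hf_d] = \mathrm{sign}(\tilde c(k,d)) \sum_{\ell=0}^{k'} \frac{c(k',\ell)}{n^\ell}\,\mb E[S(x)^\ell f_d(x)], \qquad S(x)\eqdef x_1+\cdots+x_n,
\end{equation}
and to observe that $\mb E[S^\ell f_d]$ counts length-$\ell$ sequences in $\{1,\ldots,n\}^\ell$ for which exactly $d$ of the visited indices have odd multiplicity. For $\ell=d$ this forces $d$ distinct indices each used once, giving $d!\binom{n}{d}=n^d(1-o_n(1))$; for $\ell=d+2j$ with $j\geq 1$ the analogous count is $O(n^{d+j})$, since at most $j$ extra indices can appear (and each must have even multiplicity). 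Dividing by $n^\ell$, only the $\ell=d$ term survives in the limit and contributes $\mathrm{sign}(\tilde c(k,d))\cdot c(k',d)=|\tilde c(k,d)|$, while each term with $\ell>d$ is $O(n^{-1})$. This yields $\mb E[hf_d]=|\tilde c(k,d)|-\tilde\varepsilon_n(k,d)$ with $\tilde\varepsilon_n\to 0$; taking $\varepsilon_n(k,d)\eqdef\max\{\tilde\varepsilon_n(k,d),1/n\}$ ensures positivity.

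The only mildly technical point is the uniform combinatorial estimate $\mb E[S^{d+2j}f_d]=O_{d,j}(n^{d+j})$, but this is a routine multinomial count and can be made fully explicit if a quantitative rate for $\varepsilon_n(k,d)$ is desired. The parity restriction to $k'\in\{k,k-1\}$ is essential: if $k-d$ were odd, then $T_k\circ L$ would have opposite parity to $f_d$ and the pairing would vanish identically, which is precisely why $\tilde c(k,d)$ rather than $c(k,d)$ governs the asymptotics.
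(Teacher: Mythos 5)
Your proof is correct and takes a genuinely more direct route than the paper's. Both arguments start from $L_1$--$L_\infty$ duality and both use the Chebyshev test function $T_{k'}\big((x_1+\cdots+x_n)/n\big)$ with $k'\in\{k,k-1\}$ of the same parity as $d$. The paper, however, does not pair this test function with $f_d$ directly: it first introduces the auxiliary signed symmetric polynomials $\varphi_{d,k,n}=\sum_{\ell\le d,\ 2\mid d-\ell}\mathrm{sign}(\beta_{\ell,k,n})f_\ell$, proves matching upper and lower bounds for $\inf_g\|\varphi_{d,k,n}-g\|_1$ (the lower bound invoking the external estimate \cite[Lemma~27]{IRRRY21} for the quantities $\mr{Rad}_\ell T_{k}(L(\cdot))(1,\ldots,1)$), and only then extracts the bound for $f_d$ via the telescoping identity $f_d=\pm(\varphi_{d,k,n}-\varphi_{d-2,k,n})$ and the triangle inequality. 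Your argument pairs $T_{k'}\circ L$ with $f_d$ directly, expands $\mb E[S^\ell f_d]$ as a count of sequences with exactly $d$ odd-multiplicity indices, and verifies by an elementary multinomial bound that the $\ell=d$ term contributes $c(k',d)(1-O(1/n))=\tilde c(k,d)(1-O(1/n))$ while the $\ell=d+2j$ terms are $O_{k,d}(n^{-j})$. This is self-contained (no external growth lemma) and avoids the detour through $\varphi_{d,k,n}$; it also makes the quantitative rate $\e_n(k,d)=O_{k,d}(1/n)$ transparent. The paper's route buys, as a byproduct, the two-sided asymptotic for the whole family $\varphi_{d,k,n}$, but for Corollary~\ref{cor} itself your shortcut is cleaner.
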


The main motivation behind the work \cite{Ole17} was a question of Bogucki, Nayar and Wojciechowski, asking to estimate the $L_1$-distance of the Rademacher sum $f_1$ from the $k$-th tail space.  Corollary \ref{cor} extends (at least asymptotically in $n$) the answer given by Oleszkiewicz to all symmetric homogeneous polynomials.  We point out though that for $k=1$, \eqref{eq:cor} is sharper than Oleszkiewicz's bound \eqref{eq:ole0} as $n\to\infty$, as \eqref{eq:ole0} is tight only up to a multiplicative constant.

%---------------------------------------------------------------------------------------------

\subsection*{Acknowledgements} We are grateful to Krzysztof Oleszkiewicz for valuable discussions. H. Z. is grateful to Institut de Mathématiques de Jussieu for the hospitality during a visit in 2023.

%---------------------------------------------------------------------------------------------
%---------------------------------------------------------------------------------------------

\section{Proofs} \label{sec:2}

We proceed to the proofs of our results. We start with the simple duality argument leading to Proposition \ref{prop:dual-mom},  variants of which will be used throughout the paper.

\begin{proof}[Proof of Proposition \ref{prop:dual-mom}]
Consider the identity operator acting as $\msf{id}(h)=h$ on a function of the form $h:\{-1,1\}^n\to\R$.  Then, the optimal constant $\msf{M}_{p,q}(k)$ can be expressed as
\begin{equation} \label{m*}
\msf{M}_{p,q}(k) = \big\| \msf{id}: (\ms{P}_{\leq k}^n,\|\cdot\|_p) \to (\ms{P}_{\leq k}^n,\|\cdot\|_q) \big\| = \big\| \msf{id}^*: (\ms{P}_{\leq k}^n,\|\cdot\|_q)^\ast \to (\ms{P}_{\leq k}^n,\|\cdot\|_p)^\ast \big\|
\end{equation}
by duality. Moreover, observe that since $(\ms{P}_{\leq k}^n,\|\cdot\|_r)$ is a subspace of $L_r$, its dual is isometric to
\begin{equation}
(\ms{P}_{\leq k}^n,\|\cdot\|_r)^\ast = L_{r^\ast} \big/(\ms{P}_{\leq k}^n)^\perp = L_{r^\ast} \big/\ms{P}_{> k}^n,
\end{equation}
where $A^\perp$ is the annihilator of $A$.  Since it is also clear that $\msf{id}^* = \msf{id}$,  \eqref{m*} concludes the proof.
\end{proof}

Using a theorem of Hitczenko \cite{Hit93} as input and the same duality, we deduce Theorem \ref{thm:dual-hit}.

\begin{proof}[Proof of Theorem \ref{thm:dual-hit}]
The result of \cite{Hit93} asserts that if $\pmb{a}=(a_0,a_1,\ldots,a_n)$ and $f_{\pmb{a}}(x) = a_0+\sum_{i=1}^n a_ix_i$, 
\begin{equation}
\|f_{\pmb{a}}\|_{r^\ast} = \bigg( \mb{E} \Big|\sum_{i=0}^n a_i x_i \Big|^{r^*}\bigg)^{1/r^*} \asymp \K(\pmb{a},\sqrt{r^\ast}; \ell_1^{n+1}, \ell_2^{n+1}),
\end{equation}
where $x_0, x_1,\ldots,x_n$ are independent Bernoulli random variables, and the first equality holds due to symmetry.  In other words, the linear operator 
\begin{equation}
T: \big(\R^{n+1},  \K(\cdot,\sqrt{r^\ast}; \ell_1^{n+1}, \ell_2^{n+1})\big) \to (\ms{P}_{\leq 1}^n, \|\cdot\|_{r^\ast})
\end{equation}
given by $T{\pmb{a}}=f_{\pmb{a}}$  is an isomorphism, and thus the same holds for its adjoint.  Recalling that
\begin{equation}
\K(\pmb{a},\sqrt{r^\ast}; \ell_1^{n+1}, \ell_2^{n+1}) = \inf\big\{ \|\pmb{b}\|_{\ell_1^{n+1}} + \sqrt{r^\ast} \|\pmb{c}\|_{\ell_2^{n+1}}: \ \pmb{a}=\pmb{b}+\pmb{c}\big\}
\end{equation}
and the duality between sums and intersections of normed spaces \cite[Theorem~2.7.1]{BL76}, we see that the dual space of $(\R^{n+1},  \K(\cdot,\sqrt{r^\ast}; \ell_1^{n+1}, \ell_2^{n+1}))$ can be identified with
\begin{equation}
\forall \ y\in\R^{n+1}, \qquad \|y\|_{(\R^{n+1},  \K(\cdot,\sqrt{r^\ast}; \ell_1^{n+1}, \ell_2^{n+1}))^\ast} = \max\Big\{ \|y\|_{\ell_\infty^{n+1}},  \frac{\|y\|_{\ell_2^{n+1}}}{\sqrt{r^\ast}}\Big\}.
\end{equation}
By Parseval's identity,  the action of the adjoint 
\begin{equation}
T^\ast: L_r \big/ \ms{P}_{>1}^n \to (\R^{n+1},  \K(\cdot,\sqrt{r^\ast}; \ell_1^{n+1}, \ell_2^{n+1}))^\ast
\end{equation}
is given by
\begin{equation}
T^\ast( f + \ms{P}_{>1}^n ) = \big(\mb{E}f,  \widehat{f}(\{1\}) , \ldots, \widehat{f}(\{n\}) \big)
\end{equation}
and thus the conclusion is equivalent to fact that $T^\ast$ is an isomorphism. 
\end{proof}

We now proceed to the proof of the general upper bound for polynomials given in Theorem \ref{thm:main}. The first ingredient for the proof is a discrete version of the classical Bohnenblust--Hille inequality from approximation theory (see the survey \cite{DS14}) proven in \cite{Ble01,DMP19}. This asserts that for every $d\in \N$, there exists a (sharp) constant $\msf{B}_d\in(0,\infty)$ such that for any $n\geq d$, every polynomial $f:\{-1,1\}^n\to \R$ of degree at most $d$ satisfies
\begin{equation} \label{eq:bh}
\Big( \sum_{S\subseteq\{1,\ldots,n\}} | \widehat{f}(S)|^{\frac{2d}{d+1}} \Big)^{\frac{d+1}{2d}} \leq \msf{B}_d \|f\|_\infty.
\end{equation}
Moreover, $\frac{2d}{d+1}$ is the least exponent for which the implicit constant  becomes independent of the ambient dimension $n$. The best known upper bound $B_d \leq \exp(C\sqrt{d\log d})$ for the constant $\msf{B}_d$ is due to the work of Defant, Masty\l o and P\'erez \cite{DMP19}. 

%\begin{lemma} \label{lem:dual-bh}
%For every $d\in\N$, the constant $\msf{B}_d$ in inequality \eqref{eq:bh} is also the least constant for which every function $f:\{-1,1\}^n\to\R$, where $n\geq d$, satisfies
%\begin{equation} \label{eq:dual-bh}
%\inf_{g\in \ms{P}_{>d}^n} \|f-q\|_{1} \leq \msf{B}_d \bigg( \sum_{\substack{S\subseteq\{1,\ldots,n\}: \\ |S|\leq d}} |\hat{f}(S)|^{\frac{2d}{d-1}} \bigg)^{\frac{d-1}{2d}}.
%\end{equation}
%\end{lemma}

%\begin{proof}
%Consider the Fourier transform 
%\begin{equation}
%\ms{F}: (\ms{P}_{\leq d}^n,\|\cdot\|_\infty) \to \ell_{\frac{2d}{d+1}}^{\binom{[n]}{\leq d}},
%\end{equation}
%where $\binom{[n]}{\leq d}$ is the collection of subsets of $\{1,\ldots,n\}$ with cardinality at most $d$,  mapping $f$ to its Fourier coefficients.  Inequality \eqref{eq:bh} asserts that $\msf{B}_d=\|\ms{F}\|$. Therefore,  its adjoint
%\begin{equation}
%\ms{F}^\ast: \ell_{\frac{2d}{d-1}}^{\binom{n}{\leq d}} \to L_1 \big/\ms{P}_{>d}^n
%\end{equation}
%also has the same norm.  Moreover, Parseval's identity shows that for a sequence $\big\{a_S\big\}_{|S|\leq d}$ of complex numbers, the action of the adjoint of the Fourier transform is
%\begin{equation}
%\ms{F}^\ast \big\{a_S\big\}_{|S|\leq d} = \sum_{\substack{S\subseteq\{1,\ldots,n\}: \\ |S|\leq d}} a_S w_S + \ms{P}_{>d}^n,
%\end{equation}
%as $\mb{E}[fg] = \sum_S \hat{f}(S) \hat{g}(S)$. This completes the proof of the lemma.
%\end{proof}

The level $\ell$-Rademacher projection of a function $f:\{-1,1\}^n\to\R$ is defined as
\begin{equation}
\forall \ x\in\{-1,1\}^n,\qquad \mr{Rad}_\ell f(x) \eqdef \sum_{\substack{S\subseteq\{1,\ldots,n\}: \\ |S|= \ell}} \widehat{f}(S) w_S(x).
\end{equation}
Moreover, we write $\mr{Rad}_{\leq d} = \sum_{\ell\leq d} \mr{Rad}_\ell$.  Apart from the discrete Bohnenblust--Hille inequality \eqref{eq:bh}, we will also use a standard bound on the norm of the $\ell$-Rademacher projections which is usually attributed to Figiel (see also \cite[Section~3]{EI22} for a short proof).

\begin{proposition} \label{prop:figiel}
Let $n\geq k\geq d$. Then, every function $f:\{-1,1\}^n\to\R$ of degree at most $k$ satisfies
\begin{equation} \label{eq:figiel0}
\forall\ 0\le \ell \le d, \qquad \big\|\mr{Rad}_\ell f\big\|_\infty \leq 
|\tilde{c}(k,\ell)| \ \|f\|_\infty,
\end{equation}
and thus,
\begin{equation} \label{eq:figiel}
\big\| \mr{Rad}_{\leq d} f\big\|_\infty \leq \sum_{\ell=0}^d \big\|\mr{Rad}_\ell f\big\|_\infty \leq 
\sum_{\ell=0}^d |\tilde{c}(k,\ell)| \ \|f\|_\infty,
\end{equation}
where $\tilde{c}(k,\ell)$ is given by \eqref{eq:ctil}.  It is moreover known that $|\tilde{c}(k,\ell)|\le \frac{k^\ell}{\ell!}$.
\end{proposition}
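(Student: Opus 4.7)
The plan is to reduce the multidimensional estimate to a classical extremal problem for the coefficients of univariate polynomials on $[-1,1]$.  Fix $x \in \{-1,1\}^n$ and consider the univariate polynomial (depending on $x$)
$$q(u) \eqdef \sum_{j=0}^{k} \mr{Rad}_j f(x)\,u^j,$$
which has degree at most $k$ since $\deg f \le k$.  The coefficient of $u^\ell$ in $q$ is precisely $\mr{Rad}_\ell f(x)$, so the task reduces to bounding this coefficient in terms of $\|q\|_{L_\infty[-1,1]}$.

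A uniform bound on $q$ on $[-1,1]$ comes from the standard stochastic representation of the noise operator: for $u \in [-1,1]$ one has
$$q(u) = \mb{E}_{\pmb{\epsilon}}\big[f(\epsilon_1 x_1, \ldots, \epsilon_n x_n)\big],$$
where $\epsilon_1,\ldots,\epsilon_n$ are independent $\pm 1$-valued random variables with $\mb{E}[\epsilon_i]=u$ (a valid distribution for $u \in [-1,1]$).  Since each $\epsilon_i x_i \in \{-1,1\}$, this immediately gives $|q(u)| \le \|f\|_\infty$ for every $u \in [-1,1]$.

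The heart of the argument is then the classical Chebyshev extremal theorem for polynomial coefficients: among univariate polynomials of degree at most $k$ with sup-norm at most $1$ on $[-1,1]$, the largest possible absolute value of the coefficient of $u^\ell$ equals $|\tilde c(k,\ell)|$, attained by $T_k$ when $k-\ell$ is even and by $T_{k-1}$ when $k-\ell$ is odd (in the latter case $T_k$ itself has vanishing coefficient of $u^\ell$, so one must drop one degree).  Combined with the previous step this yields \eqref{eq:figiel0}, and \eqref{eq:figiel} follows at once by the triangle inequality.  The final estimate $|\tilde c(k,\ell)| \le k^\ell/\ell!$ is a routine Chebyshev coefficient bound, read off for instance from $T_k(\cosh t)=\cosh(kt)$ by comparing Taylor expansions at $t=0$.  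The main subtlety is the Chebyshev extremal step: it is classical but requires an alternation/equioscillation argument rather than mere orthogonality to produce the sharp constant $|\tilde c(k,\ell)|$.
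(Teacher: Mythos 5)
Your argument is correct and is exactly the standard Figiel proof that the paper itself does not reproduce but delegates to \cite{EI22}, Section~3: interpret $\sum_j \mr{Rad}_j f(x)\,u^j$ as the noise operator $T_u f(x) = \mb{E}[f(\epsilon_1 x_1,\dots,\epsilon_n x_n)]$ with $\mb{E}[\epsilon_i]=u$, observe it is bounded by $\|f\|_\infty$ uniformly for $u\in[-1,1]$, and invoke V.~A.~Markov's extremal theorem for the individual coefficients of a univariate polynomial bounded by $1$ on $[-1,1]$. This is the right decomposition, and your parity discussion ($T_k$ versus $T_{k-1}$ depending on the parity of $k-\ell$) matches the definition of $\tilde c(k,\ell)$ in \eqref{eq:ctil}.

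One small inaccuracy in the closing side remark: the bound $|\tilde c(k,\ell)|\le k^\ell/\ell!$ cannot be ``read off'' from $T_k(\cosh t)=\cosh(kt)$ by comparing Taylor coefficients at $t=0$. Since $\cosh 0=1$, the chain rule expresses the $t$-derivatives of $T_k(\cosh t)$ at $t=0$ in terms of $T_k^{(j)}(1)$, not $T_k^{(\ell)}(0)=\ell!\,c(k,\ell)$; moreover the alternating signs of the $c(k,\ell)$ prevent a direct comparison. A derivation in that spirit does work if one replaces $\cosh$ by $\sinh$: since the signs of the nonzero Chebyshev coefficients alternate, one has $\sum_\ell |c(k,\ell)|\,(\sinh t)^\ell = \cosh(kt)$ for $k$ even (and $\sinh(kt)$ for $k$ odd), obtained from $T_k(\iup\sinh t)$; as $(\sinh t)^\ell = t^\ell + (\text{nonnegative higher-order terms})$, matching the coefficient of $t^\ell$ yields $|c(k,\ell)|\le k^\ell/\ell!$. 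This is only a remark, since the paper also states this coefficient bound without proof, but the $\cosh$ version as you wrote it does not give what is claimed.
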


Combining the above with Parseval's identity, we deduce the following bound.

\begin{lemma} \label{lem:dual-main}
Let $n\geq k \geq d$. Then, every function $f:\{-1,1\}^n\to\R$ of degree at most $k$ satisfies
\begin{equation}
\max\big\{ \big\| \widehat{\mr{Rad}_{\leq d}(f)} \big\|_{\ell_2^{m}},  \sigma(k,d)^{-1} \big\| \widehat{\mr{Rad}_{\leq d}(f)} \big\|_{\ell_{\frac{2d}{d+1}}^{m}} \big\} \leq \inf_{g\in\ms{P}_{>d}^n\cap\ms{P}_{\leq k}^n} \|f-g\|_\infty,
\end{equation}
where $m=\binom{n}{0}+\cdots+\binom{n}{d}$ and $\sigma(k,d) = \msf{B}_d\sum_{\ell=0}^d |\tilde{c}(k,\ell)|$.
\end{lemma}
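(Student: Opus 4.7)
The plan is to observe that the quantity $\widehat{\mr{Rad}_{\leq d}(f)}$ is unchanged if one subtracts any element of $\ms{P}_{>d}^n$ from $f$, and then apply the two key analytic ingredients (Parseval and the Bohnenblust--Hille inequality \eqref{eq:bh}), each preceded by the relevant norm estimate on the Rademacher projection.

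First, I would fix an arbitrary $g \in \ms{P}_{>d}^n \cap \ms{P}_{\leq k}^n$ and set $h \eqdef f - g$. Since $g$ has no spectrum in levels $0, 1, \ldots, d$, we have the crucial identity $\mr{Rad}_{\leq d}(h) = \mr{Rad}_{\leq d}(f)$, while $h$ itself is a polynomial of degree at most $k$. The task then reduces to bounding $\|\widehat{\mr{Rad}_{\leq d}(h)}\|_{\ell_2^m}$ and $\|\widehat{\mr{Rad}_{\leq d}(h)}\|_{\ell_{2d/(d+1)}^m}$ by appropriate multiples of $\|h\|_\infty$.

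For the $\ell_2^m$ bound, Parseval's identity gives $\|\widehat{\mr{Rad}_{\leq d}(h)}\|_{\ell_2^m} = \|\mr{Rad}_{\leq d}(h)\|_2$, which is at most $\|h\|_2$ since $\mr{Rad}_{\leq d}$ is the orthogonal projection onto $\ms{P}_{\leq d}^n$, and a fortiori at most $\|h\|_\infty$. For the $\ell_{2d/(d+1)}^m$ bound, I would first invoke Proposition \ref{prop:figiel} applied to $h$ (which is admissible because $\deg h \leq k$) to obtain
\begin{equation}
\|\mr{Rad}_{\leq d}(h)\|_\infty \leq \Big(\sum_{\ell=0}^d |\tilde{c}(k,\ell)|\Big)\|h\|_\infty.
\end{equation}
Since $\mr{Rad}_{\leq d}(h)$ is now a polynomial of degree at most $d$, the discrete Bohnenblust--Hille inequality \eqref{eq:bh} applies and yields
\begin{equation}
\|\widehat{\mr{Rad}_{\leq d}(h)}\|_{\ell_{2d/(d+1)}^m} \leq \msf{B}_d \|\mr{Rad}_{\leq d}(h)\|_\infty \leq \msf{B}_d \Big(\sum_{\ell=0}^d |\tilde{c}(k,\ell)|\Big) \|h\|_\infty = \sigma(k,d)\|h\|_\infty.
\end{equation}

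Finally, taking the infimum over $g \in \ms{P}_{>d}^n \cap \ms{P}_{\leq k}^n$ on the right-hand side of both inequalities completes the proof. No real obstacle is anticipated: the argument is essentially a bookkeeping exercise combining the two cited inequalities with the trivial observation that $\mr{Rad}_{\leq d}$ annihilates $\ms{P}_{>d}^n$. The only minor point worth emphasizing is that restricting $g$ to also lie in $\ms{P}_{\leq k}^n$ is what permits the application of Figiel's bound to $h$, which requires $\deg h \leq k$.
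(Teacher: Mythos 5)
Your argument is correct and follows essentially the same route as the paper: subtract $g$, use Parseval for the $\ell_2$ bound, and combine the discrete Bohnenblust--Hille inequality \eqref{eq:bh} with Figiel's estimate \eqref{eq:figiel} for the $\ell_{2d/(d+1)}$ bound, finally taking the infimum over $g$. The only cosmetic difference is the order in which you chain the two inequalities; the substance is identical.
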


\begin{proof}
Fix a function $g\in\ms{P}_{>d}^n\cap\ms{P}_{\leq k}^n$.  Then, 
\begin{equation}
 \big\| \widehat{\mr{Rad}_{\leq d}(f)} \big\|_{\ell_2^{m}} \leq  \big\| \widehat{f} - \widehat{g} \big\|_{\ell_2^{M}} = \|f-g\|_2 \leq \|f-g\|_\infty,
\end{equation}
where $M=\binom{n}{0}+\cdots+\binom{n}{k}$.  Moreover,  we have
\begin{equation*}
 \big\| \widehat{\mr{Rad}_{\leq d}(f)} \big\|_{\ell_{\frac{2d}{d+1}}^{m}} \stackrel{\eqref{eq:bh}}{\leq} \msf{B}_d \big\| \mr{Rad}_{\leq d}(f) \big\|_{\infty} =  \msf{B}_d \big\| \mr{Rad}_{\leq d}(f-g) \big\|_{\infty}  \stackrel{\eqref{eq:figiel}}{\leq} \msf{B}_d\sum_{\ell=0}^d |\tilde{c}(k,\ell)| \ \|f-g\|_\infty. \hfill\qedhere
\end{equation*}
\end{proof}

Equipped with Lemma \ref{lem:dual-main}, we can complete the proof of Theorem \ref{thm:main}.

\begin{proof}[Proof of Theorem \ref{thm:main}] Consider the normed spaces $X = (\ms{P}_{\leq k}^n , \|\cdot\|_\infty)$ and $Y=(\R^m,\|\cdot\|_Y)$ with
\begin{equation}
\forall \ y\in \R^m, \qquad \|y\|_Y = \max\big\{\|y\|_{\ell_2^m},   \sigma(k,d)^{-1} \|y\|_{\ell_{\frac{2d}{d+1}}^m}\big\}
\end{equation}
and $m=\binom{n}{0}+\cdots+\binom{n}{d}$. Moreover,  let $Z = \ms{P}^n_{>d}\cap\ms{P}_{\leq k}^n \subset X$,  viewed as a normed subspace of $X$. Lemma \ref{lem:dual-main} asserts that the linear operator $A: X/Z \to Y$ given by
\begin{equation}
\forall \ f\in X, \qquad A( f + Z ) = \big( \widehat{f}(S) \big)_{\substack{|S|\leq d}}
\end{equation}
has norm $\|A\| \leq 1$. Therefore, the same holds for its adjoint $A^*: Y^* \to (X/Z)^*$.

By the usual duality between sums and intersections of normed spaces \cite[Theorem~2.7.1]{BL76},  we see that the space $Y^*$ is isometric to
\begin{equation}
\forall \ w\in\R^m, \qquad \|w\|_{Y^\ast} = \msf{K}\big(w, \sigma(k,d); \ell_2^m, \ell_{\frac{2d}{d-1}}^m\big).
\end{equation}
Moreover, as $X/Z$ is a quotient of $X$, its dual is the subspace of $X^* = L_1 / \ms{P}^n_{>k}$ which is identified with the annihilator of $Z$ inside $X^\ast$.  In other words,  it is the set
\begin{equation}
(X/Z)^\ast = \big\{ f+\ms{P}_{>k}^n: \ \mb{E}[fg]=0 \ \mbox{for every } g\in Z \big\} =  \big\{ f+\ms{P}_{>k}^n: \ f\in\ms{P}_{\leq d}^n\big\}= \mr{span}(\ms{P}_{\leq d}^n \cup \ms{P}_{>k}^n) \big/ \ms{P}_{>k}^n
\end{equation}
equipped with the $L_1$ quotient norm.  Finally, for a sequence $\pmb{a}=(a_S)_{|S|\leq d}\in Y^\ast$ and an equivalence class $f+Z\in X/Z$,  we have
\begin{equation}
\langle \pmb{a}, A(f+Z) \rangle = \sum_{\substack{S\subseteq\{1,\ldots,n\}: \\ |S|\leq d}} a_S \widehat{f}(S) = \Big\langle \sum_{\substack{S\subseteq\{1,\ldots,n\}: \\ |S|\leq d}} a_S w_S + \ms{P}_{>k}^n ,  f+Z \Big\rangle =  \langle A^*(\pmb{a}) , f+Z \rangle,
\end{equation}
where the first brackets $\langle \cdot, \cdot\rangle$ denote the duality in $Y$ and the following brackets denote the duality in $X/Z$.  Therefore, we conclude that 
\begin{equation}
\forall  \ \pmb{a}\in Y^*, \qquad A^*(\pmb{a}) = \sum_{\substack{S\subseteq\{1,\ldots,n\}: \\ |S|\leq d}} a_S w_S + \ms{P}_{>k}^n
\end{equation}
and thus, the condition $\|A^*\|\leq 1$ means that for any $f:\{-1,1\}^n\to\R$ of degree at most $d$,
\begin{equation}
\inf_{g\in\ms{P}_{>k}^n} \|f-g\|_1 = \big\| A^*\big(\widehat{f}\big) \big\|_{(X/Z)^*} \leq \big\|\widehat{f}\big\|_{Y^*} = \msf{K}\big( \widehat{f}, \sigma(k,d); \ell_2^m, \ell_{\frac{2d}{d-1}}^m\big).
\end{equation}
Finally, since
\begin{equation}
\sigma(k,d) \leq \msf{B}_d \sum_{\ell=0}^d |\tilde{c}(k,\ell)| \leq \msf{B}_d \sum_{\ell=0}^d \frac{k^\ell}{\ell!} \leq e\msf{B}_d k^d,
\end{equation}
we deduce the conclusion of the theorem with $\msf{C}_d=e\msf{B}_d$.
\end{proof}

\begin{remark} \label{rem:d}
To the best of our knowledge, there are no nonconstant lower bounds on the size of the discrete Bohnenblust--Hille constant $\msf{B}_d$, so it is even conceivable that the constant $\msf{C}_d$ in \eqref{eq:main} can be chosen to be independent of $d$.
\end{remark}

\begin{remark}
A duality argument similar to that employed for Theorem \ref{thm:main} shows that for every $d\in\N$, the constant $\msf{B}_d$ in inequality \eqref{eq:bh} is also the least constant for which every function $f:\{-1,1\}^n\to\R$, where $n\geq d$, satisfies
\begin{equation} \label{eq:dual-bh}
\inf_{g\in \ms{P}_{>d}^n} \|f-g\|_{1} \leq \msf{B}_d \bigg( \sum_{\substack{S\subseteq\{1,\ldots,n\}: \\ |S|\leq d}} |\widehat{f}(S)|^{\frac{2d}{d-1}} \bigg)^{\frac{d-1}{2d}}.
\end{equation}
\end{remark}

\begin{remark}
It was pointed out to us by Oleszkiewicz that the main result \eqref{eq:ole} of \cite{Ole17} also admits a dual formulation. Namely, for every $\pmb{a}=(a_1,\ldots,a_n)\in\R^n$,  we have
\begin{equation}
\inf \big\{ \|f_{\pmb{a}} - g\|_\infty: \ g\in \ms{P}_{\{0\}\cup\{2,\ldots,k\}}^n\big\} \asymp \max\Big\{ \|\pmb{a}\|_{\ell_2^n}, \frac{\|\pmb{a}\|_{\ell_1^n}}{k}\Big\}.
\end{equation}
This can be proven using similar ideas as in the proof of Theorem \ref{thm:main}.
\end{remark}

A slight variant of the arguments above also yields Theorem \ref{thm:symmetric} for symmetric functions.

\begin{proof}[Proof of Theorem \ref{thm:symmetric}]
Let $f$ be a symmetric function of the form $f=\sum_{\ell=0}^d \alpha_\ell f_\ell$ where $f_\ell$ is the $\ell$-th elementary symmetric polynomial. Then,  the Hahn--Banach theorem gives
\begin{equation} \label{eq:hb}
\inf_{g\in \ms{P}_{>k}^n} \|f-g\|_1 = \sup_{0\neq h\in \ms{P}_{\leq k}^n} \frac{\mb{E} [fh]}{\|h\|_\infty}.
\end{equation}
Observe now that we can write
\begin{equation}
\mb{E}[fh] = \sum_{\ell=0}^d \alpha_\ell \mb{E}[f_\ell h] = \sum_{\ell=0}^d \alpha_\ell \sum_{\substack{S\subseteq\{1,\ldots,n\}: \\  |S|=\ell}} \widehat{h}(S) = \sum_{\ell=0}^d \alpha_\ell \mr{Rad}_\ell h(1,\ldots,1).
\end{equation}
Thus, by Figiel's bound \eqref{eq:figiel0},
\begin{equation}
\mb{E}[fh] \leq \sum_{\ell=0}^d |\alpha_\ell| \ \|\mr{Rad}_\ell h\|_\infty \leq \sum_{\ell=0}^d |\alpha_\ell| |\tilde{c}(k,\ell)|\ \|h\|_\infty
\end{equation}
and the desired inequality follows from \eqref{eq:hb}.
\end{proof}

Equipped with Theorem \ref{thm:symmetric}, we present the proof of Corollary \ref{cor}.

\begin{proof}[Proof of Corollary \ref{cor}]
The upper bound in \eqref{eq:cor} follows immediately from Theorem \ref{thm:symmetric}.  For the lower bound, consider the auxiliary symmetric function $H_{k,n}:\{-1,1\}^n\to \R$ given by
\begin{equation}
\forall \ x\in\{-1,1\}^n,\qquad H_{k,n}(x)\eqdef T_k\left(\frac{x_1+\cdots +x_n}{n}\right)=\sum_{\ell=0}^{k}\beta_{\ell,k,n} \ f_\ell(x),
\end{equation}
where $f_\ell$ is the $\ell$-th elementary symmetric polynomial, and notice that $H_{k,n}$ has degree at most $k$.  As $T_k(x)$ is odd or even when $k$ is odd or even respectively, it follows that $\beta_{\ell,k,n}=0$ if $k-\ell$ is odd.  We distinguish two cases depending on the parity of $k-d$.

\smallskip

\noindent $\bullet$ Suppose that $k-d$ is even and consider the function $\varphi_{d,k,n}:\{-1,1\}^n\to \R$ given by
\begin{equation}\label{eq:f_d,k,n}
\forall \ x\in\{-1,1\}^n,\qquad	\varphi_{d,k,n}(x)\eqdef \sum_{0\le \ell \le d:\ 2|d-\ell}\mr{sign}(\beta_{\ell,k,n}) f_\ell(x)
\end{equation}
that is also symmetric and of degree at most $d$. Then,  on one hand we know that 
\begin{equation} \label{ubb}
	\inf_{g\in \ms{P}_{>k}^n}\|\varphi_{d,k,n}-g\|_1
	\stackrel{\eqref{eq:sym}}{\le} \sum_{0\le \ell\le d:\ 2|d-\ell}|\tilde{c}(k,\ell)| 
	=\sum_{0\le \ell\le d:\ 2|d-\ell}|c(k,\ell)| .
\end{equation}
On the other hand, we have the following lower estimate,
\begin{equation}
		\inf_{g\in \ms{P}_{>k}^n} \|\varphi_{d,k,n}-g\|_1 \stackrel{\eqref{eq:hb}}{=} \sup_{0\neq h\in \ms{P}_{\leq k}^n} \frac{\mb{E} [\varphi_{d,k,n}h]}{\|h\|_\infty}
		\ge \frac{|\E[\varphi_{d,k,n}H_{k,n}]|}{\|H_{k,n}\|_\infty}.
\end{equation}
By definition, $\|H_{k,n}\|_\infty\le \sup_{x\in [-1,1]}|T_k(x)|\le 1$ and $H_{k,n}(1,\ldots,1)=T_k(1)=1$.  Therefore, 
\begin{equation}
\begin{split}
	\inf_{g\in \ms{P}_{>k}^n} \|\varphi_{d,k,n}-g\|_1
	 \ge |\E[\varphi_{d,k,n}H_{k,n}]|
	 &=\bigg|\sum_{0\le \ell\le d:\ 2|d-\ell}\mr{sign}(\beta_{\ell,k,n}) \mr{Rad}_\ell H_{k,n}(1,\dots, 1)\bigg|\\
	 &=\sum_{0\le \ell\le d:\ 2|d-\ell}\left| \mr{Rad}_\ell H_{k,n}(1,\dots, 1)\right|.
\end{split}
\end{equation}
To further estimate this sum, we use \cite[Lemma~27]{IRRRY21} which implies that there exists a positive constant $\e_n(k,d)>0$ with $\e_{n}(k,d) = O_{k,d}(1/n)$ as $n\to\infty$, such that
\begin{equation}
\sum_{0\le \ell\le d:\ 2|d-\ell}\left |\mr{Rad}_\ell H_{k,n}(1,\dots, 1)\right|
	\ge\sum_{0\le \ell\le d:\ 2|d-\ell} |c(k,\ell)|-\e_n(k,d).
\end{equation}
Hence,  combining the above we conclude that
\begin{equation}\label{ineq: sharp2}
			\inf_{g\in \ms{P}_{>k}^n}\|\varphi_{d,k,n}-g\|_1
			\ge\sum_{0\le \ell\le d:\ 2|d-\ell} |c(k,\ell)|-\e_n(k,d).
\end{equation}
Finally, to bound from below the $L_1$-distance of $f_d$ from the tail space,  we write (putting $\varphi_{0,k,n}=\varphi_{-1,k,n}\equiv 0$)
\begin{equation}
	f_d
	=\mr{sign}(\beta_{d,k,n})(\varphi_{d,k,n}-\varphi_{d-2,k,n})
\end{equation}
and using the triangle inequality, we get
\begin{equation}
\begin{split}
	 		\inf_{g\in \ms{P}_{>k}^n}&\left\|f_d-g\right\|_{1}
	 		\ge \inf_{g\in \ms{P}_{>k}^n}\left\|\varphi_{d,k,n}-g\right\|_{1}
	 		-\inf_{g\in \ms{P}_{>k}^n}\left\|\varphi_{d-2,k,n}-g\right\|_{1}\\
	 		&\stackrel{\eqref{ubb}\wedge\eqref{ineq: sharp2}}{\ge} \sum_{0\le \ell\le d:\ 2|d-\ell} |c(k,\ell)| - \e_n(k,d) -\sum_{0\le \ell\le d-2:\ 2|d-2-\ell} |c(k,\ell)| =|c(k,d)|-\e_n(k,d),
\end{split}
\end{equation}
thus concluding the proof of the lower bound in \eqref{eq:cor}.

\smallskip

\noindent $\bullet$ If $k-d$ is odd, we use the identity (putting $\varphi_{0,k,n}=\varphi_{-1,k,n}=\varphi_{d,-1,n}\equiv 0$)
\begin{equation}
f_d = \mr{sign}(\beta_{d,k-1,n})(\varphi_{d,k-1,n}-\varphi_{d-2,k-1,n}).
\end{equation}
The rest of the argument is identical.
\end{proof}

\begin{remark}
In this paper, we studied dual versions of moment comparison estimates on the hypercube \eqref{eq:mom} and investigated the endpoint case of their duals \eqref{eq:dual-mom} for polynomials.  By formal reasoning similar to the proof of Proposition \ref{prop:dual-mom}, one can derive dual versions of various other polynomial inequalities,  including Bernstein--Markov inequalities and their reverses and bounds for the action of the heat semigroup. We refer to \cite{EI20} for  a systematic treatment of such estimates.
\end{remark}

%---------------------------------------------------------------------------------------------
%---------------------------------------------------------------------------------------------

%\nocite{*}

\end{document}